\documentclass[12pt]{amsart}
\usepackage{latexsym,amsxtra,amscd,ifthen,amsmath,color, multicol,hyperref,mathdots,mathrsfs}
\usepackage{amsfonts}
\usepackage{verbatim}
\usepackage{amsmath}
\usepackage{amsthm}
\usepackage{amssymb}
\usepackage[all,cmtip]{xy}
\usepackage[normalem]{ulem}
\usepackage{enumerate}
\usepackage[latin1]{inputenc}
\usepackage{graphicx}
\usepackage{multirow}
\usepackage{latexsym}
\textwidth=36pc
\oddsidemargin=18pt
\evensidemargin=18pt

%%%%%%%%%%%%%%%%%%%%%%%%%%%%%%%%%%%%-Macros-%%%%%%%%%%%%%%%%%%%%%%%%%%%%%%%%%
\swapnumbers
\theoremstyle{plain}
\newtheorem{thm}{Theorem}[section]
\newtheorem{lem}[thm]{Lemma}
\newtheorem{prop}[thm]{Proposition}
\newtheorem{cor}[thm]{Corollary}

\theoremstyle{definition}

\newtheorem*{Ack}{Acknowledgement}
\newtheorem{pq-classification}[thm]{Classification of $pq$-dimensional pointed Hopf algebras}

\theoremstyle{remark}

\def\k{\ensuremath{\bold{k}}}

\newcommand*{\e}{\ensuremath{\varepsilon}}

\newcommand*{\field}{\ensuremath{\bold{k}}}

\newcommand*{\gr}{\ensuremath{\text{\upshape gr}}}

\def\Chara{\operatorname{char}}

\begin{document}

\title{FS-indicators of $pq$-dimensional pointed Hopf algebras}

\author{Si Chen}
%\email{sic26@pitt.edu}
\author{Tiantian Liu}
%\email{til43@pitt.edu}
\author{Linhong Wang}
\email{sic26@pitt.edu, til43@pitt.edu, lhwang@pitt.edu}
\address{Department of Mathematics\\
University of Pittsburgh, Pittsburgh, PA 15260}

\author{Xingting Wang}
\email{xingting.wang@howard.edu}
\address{Department of Mathematics\\Howard University, Washington, DC 20059}

\begin{abstract}
We compute higher Frobenius-Schur indicators of $pq$-dimensional pointed Hopf algebras in characteristic $p$ through their associated graded Hopf algebras. These indicators are gauge invariants for the monoidal categories of representations of these algebras.
\end{abstract}

%\thanks{}
\subjclass[2010]{16T05}
\keywords{pointed Hopf algebras; FS-indicators; positive characteristic}

\maketitle

\section{Introduction}

Higher Frobenius-Schur (FS) indicators of Hopf algebras were introduced in \cite{LinMon} and were further studied in \cite{KMN, KSZ2, Shi} and other research works. These indicators provide gauge invariants of the monoidal representation categories of these Hopf algebras, see \cite[Theorem 2.2]{KMN}. The values of FS-indicators of certain Hopf algebras have been computed in, e.g., \cite{HHWW, KMN, WW}.  In this short note, we concern the indicators of pointed Hopf algebras of dimension $pq$ over a field of positive characteristic. The classification of such Hopf algebras was recently obtained in \cite{Xio}. Let $r$ be a prime. We define a character function $\chi_r(n)$, associated to the prime $r$, in positive integers $n$ such that 
\[ \chi_r(n)=
\begin{cases}
r, & \text{if} \quad r\mid n\\
1, & \text{if}\quad r\nmid n.
\end{cases}
\]
Then our main result states as follows.
\begin{thm}\label{main}
Let $q\neq p$ be primes, $H$ be a $pq$-dimensional pointed Hopf algebra over characteristic $p$. If $H$ is commutative and cocommutative, then the $n$-th indicator \[\nu_n(H) \equiv \chi_p(n)\chi_q(n) \pmod p.\]
Otherwise, 
\[\nu_n(H)\equiv \chi_p(n) \pmod p.\]
\end{thm}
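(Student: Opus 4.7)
The plan is to exploit Xiong's classification \cite{Xio} of $pq$-dimensional pointed Hopf algebras in characteristic $p$ and to compute the indicators via the associated graded Hopf algebra. First I would enumerate the classification, partitioned according to the dichotomy of the theorem: on one hand the commutative and cocommutative examples, which comprise the group algebras $\k G$ for $G$ abelian of order $pq$ together with tensor products of such with the restricted enveloping algebra $u(\mathfrak{g})$ of a one-dimensional abelian restricted Lie algebra; on the other hand the remaining families, which feature either a non-abelian group of group-likes or a non-trivial Nichols action forcing non-commutativity or non-cocommutativity.

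The central tool is the formula $\nu_n(H) = \mathrm{tr}(f_n)$ for a specific linear endomorphism $f_n$ of $H$ built from the $n$-th Sweedler power map and the antipode. Because $f_n$ respects the coradical filtration and the off-diagonal corrections act nilpotently on $H$, their traces lie in $p\k$, giving the key reduction $\nu_n(H)\equiv \nu_n(\gr H)\pmod{p}$. Having made this reduction, $\gr H$ is a Radford biproduct $B(V)\#\k G$ where $G=G(H)$ and $B(V)$ is a Nichols algebra, and its indicator factors according to this bosonization structure. Since $\dim B(V)$ is either $1$ or a power of $p$, the Nichols factor contributes only $\chi_p(n)$ modulo $p$, while the group factor contributes $|\{g\in G : g^n=1\}|$.

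The final step is to evaluate the latter count modulo $p$ for each $G$ arising in the classification. For $G=\mathbb{Z}_p\times\mathbb{Z}_q$ the count is exactly $\chi_p(n)\chi_q(n)$; for $G=\mathbb{Z}_p$ it is $\chi_p(n)$; and for the non-abelian group of order $pq$ a Sylow count combined with the arithmetic constraints ($p\mid q-1$ or $q\mid p-1$) needed for this group to exist causes the $q$-contribution to collapse modulo $p$, yielding $\chi_p(n)$. Combining with the Nichols contribution then produces the stated congruences in each case.

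The step I expect to be the main obstacle is the reduction $\nu_n(H)\equiv\nu_n(\gr H)\pmod{p}$, because $H$ is in general non-semisimple and the FS-indicator calculus in positive characteristic is comparatively delicate. A fall-back strategy, bypassing $\gr H$ entirely, is to start from the generator-and-relation presentations in \cite{Xio} and compute each $\nu_n(H)$ directly from an explicit two-sided integral of $H$; this trades conceptual brevity for a longer but essentially mechanical case-by-case verification.
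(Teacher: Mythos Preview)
Your main worry is misplaced and the real gap lies elsewhere. The reduction $\nu_n(H)=\nu_n(\gr H)$ is not merely a congruence modulo $p$ but an \emph{equality} for any Hopf filtration, due to Shimizu \cite[Corollary~4.6]{Shi}; the paper simply cites this, so the step you flag as the obstacle is already available.

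The genuine problem is the claim that the indicator of a Radford biproduct $B(V)\#\k G$ ``factors'' into a Nichols contribution times a group contribution. This holds for honest tensor products but fails for nontrivial bosonizations. In Xiong's list every $H$ has $G(H)\cong\mathbb{Z}_q$ (never $\mathbb{Z}_p$, $\mathbb{Z}_{pq}$, or a non-abelian group: the only group-likes are the powers of $g$), so your recipe would output $\chi_p(n)\chi_q(n)$ in \emph{every} case with $B(V)\neq\k$. But for the graded Hopf algebra $B=\k[g,x]/(g^q-1,x^p)$ with $\Delta(x)=x\otimes1+g\otimes x$, and for its dual $C$ (the $xg=\xi gx$ algebra with $x$ primitive), the correct value is $\nu_n\equiv\chi_p(n)$, which differs from $\chi_p(n)\chi_q(n)$ modulo $p$ whenever $q\mid n$ and $q\not\equiv1\pmod p$. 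Your factorization therefore cannot separate the commutative--cocommutative case from the others, which is precisely the dichotomy the theorem asserts.

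What the paper actually does is your fall-back. Among the three possible $\gr H$, only $A=\k[\mathbb{Z}_q]\otimes\k[x]/(x^p)$ is a genuine tensor product, and multiplicativity of $\nu_n$ gives $\nu_n(A)\equiv\chi_p(n)\chi_q(n)$. For $B$ and $C$ the paper writes down the explicit left integrals $\Lambda=(1+g+\cdots+g^{q-1})x^{p-1}$ and $\lambda\in(\gr H)^*$, expands the Sweedler power $(g^ix^{p-1})^{[n]}$ by hand, and evaluates $\lambda(\Lambda^{[n]})$. Case $C$ yields $\nu_n(C)=n^{p-1}\equiv\chi_p(n)$ directly; case $B$ is then deduced from the duality $B^*\cong C$ together with $\nu_n(H^*)=\nu_n(H)$. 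So the integral computation is not a fall-back but the actual argument, and the bosonization shortcut you propose does not exist.
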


In this section, we also set up notation and terminology. Readers unfamiliar with Hopf algrebras are referred to standard textbooks such as \cite{Mon, S}. The proof of the main result is in section 2. Throughout, $H$ is a finite-dimensional Hopf algebra over a base field $\k$.
We use the standard notation $(H,\,m,\,u,\,\Delta,\,\e,\,S)$, where $m: H\otimes H \to H$ is the multiplication map, $u: \k\to H$ is the unit map, $\Delta: H\to H\otimes H$ is the comultiplication map, $\e: H\to \k$ is the counit map, and $S: H\to H$ is the antipode. The vector space dual of $H$ is also a Hopf algebra and will be denoted by $H^*$.  We use the Sweedler notation $\Delta(h)=\sum h_{(1)}\otimes h_{(2)}$. If $f,\,g\in H^*$, then $fg(h)=\sum f\left(h_{(1)}\right)g\left(h_{(2)}\right)$ for any $h\in H$ and $\e_{H^*}(f)=f(1)$.

%The \emph{coradical} $H_0$ of $H$ is the sum of all simple subcoalgebras of $H$. If $H_0$ is one-dimensional, $H$ is said to be \emph{connected}. If every simple subcoalgebra is one-dimensional of the form $kg$ where $g$ is a group-like element, $H$ is said \emph{pointed}. The \emph{coradical filtration} $\{H_n\}_{n\ge 0}$ of $H$ is defined inductively such that $H_n=\Delta^{-1}(H\otimes H_{n-1}+H_0\otimes H)$ for all integers $n\ge 1$. We use $\gr_C H=\bigoplus_{i\ge 0} H_{i}/H_{i-1}$ ($H_{-1}=0$) to denote the \emph{associated graded coalgebra} with respect to the coradical filtration of $H$. If $H$ is finite dimensional, then $\gr_C H$ is a graded Hopf algebra if and only if $H_0$ is a Hopf subalgebra. 

%Kashina-Montgomery-Ng  proposed the following definition of higher Frobenius-Schur (FS) indicators for a finite-dimensional Hopf algebra $H$. 

Let $n$ be a positive integer. Suppose $h_1, \ldots$, $h_n \in H$. Then the $n$-th \emph{power of multiplication} is defined as $m^{[n]}(h_1\otimes \cdots \otimes h_n)=h_1\cdots h_n$.
Let $h\in H$. The $n$-th \emph{\emph{power of comultiplication}} is defined to be
\[
\Delta^{[n]}(h)=\begin{cases}
h & n=1\\
(\Delta^{[n-1]}\otimes \mathrm{id})\left(\Delta(h)\right) & n\geq 2
\end{cases}
\]
The $n$-th \emph{Sweedler power} of $h$ is defined to be \[P_n(h)=h^{[n]}=
\begin{cases}
\epsilon(h) 1_H & n=0\\
m^{[n]}\circ\Delta^{[n]}(h) & n\geq 1
\end{cases}
\]
Then the $n$-th \emph{indicator} \cite[Definition 2.1]{KMN} of $H$ is given by  $$\nu_n(H)=\mathrm{Tr}\big(S\circ P_{n-1}\big).$$ In particular, $\nu_1(H)=1$ and $\nu_2(H)=\mathrm{Tr}(S)$. 

%It was proved in \cite[Theorem 2.2]{KMN} that the sequence $\{\nu_n(H)\}$ is an invariant of the gauge equivalence class of Hopf algebras of $H$, that is, if $H$ and $K$ are gauge equivalent then $\{\nu_n(H)\}=\{\nu_n(K)\}$.

\begin{Ack}
We began this work in an undergraduate research project at the University of Pittsburgh. We would like to express our gratitude to the math department for its supporting.
\end{Ack}

%%%%%%%%%%%%%%%%%%%%%%%%%%%%%%%%%%%%%%%%%%%%%%%%%%%%%%%%%%%%%%%%

\section{The proof of the main result}
In this section, $q\neq p$ are primes, the base field $\field$ is algebraically closed of characteristic $p$, and $\xi$ is a $q$th primitive root of unity.

\begin{thm}\cite[Theorem 2.19]{Xio}\label{pqClass}
Let $\delta=0$ or $1$. Then a $pq$-dimensional pointed Hopf algebra $H$  is isomorphic to one of the following
\begin{itemize}
\item [1)]
$\field[g,\, x]/(g^q-1,\  x^p-\delta x)$,
\item [] $\Delta(g)=g\otimes g, \Delta(x)=x\otimes 1 + 1\otimes x$;
\vspace{0.1in}

\item [2)]
$\begin{cases}
\field\langle g,\, x\rangle /(g^q-1,\  x^p,\ xg-\xi gx),& q\nmid p-1,\\
\field\langle g,\, x\rangle /(g^q-1,\  x^p-\delta x,\ gx-\xi xg),& q\mid p-1,
\end{cases}$
\item [] $\Delta(g)=g\otimes g, \Delta(x)=x\otimes 1 + 1\otimes x$;
\vspace{0.1in}

\item [3)]
$\field[g,\, x]/(g^q-1,\  x^p-\delta (1-g^p))$,
\item [] $\Delta(g)=g\otimes g, \Delta(x)=x\otimes 1 + g\otimes x$;
\vspace{0.1in}

\item [4)]
$\begin{cases}
\field\langle g,\, x\rangle /(g^q-1,\  x^p-x,\ gx- xg-g+g^2), & q\nmid p-1,\\
\field [g,\, x] /(g^q-1,\  x^p-x), & q\mid p-1,
\end{cases}$
\item [] $\Delta(g)=g\otimes g, \Delta(x)=x\otimes 1 + g\otimes x.$
\end{itemize}
\end{thm}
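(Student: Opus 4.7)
The plan is to apply the Andruskiewitsch--Schneider lifting program in positive characteristic. Writing $G := G(H)$, the coradical $H_0$ equals $\k G$, and the associated graded Hopf algebra factorises as
\[
\gr H \;\cong\; R \,\#\, \k G,
\]
where $R$ is a connected graded Hopf algebra in the Yetter--Drinfeld category ${}^G_G\mathcal{YD}$. By the Nichols--Zoeller theorem applied to $\k G\subseteq H$, $|G|$ divides $pq$, so $|G|\in\{1,p,q,pq\}$. Using that finite-dimensional connected Hopf algebras in characteristic $p$ have dimension a power of $p$, both $|G|=1$ (which would give $\dim R=pq$) and $|G|=p$ (which would give $\dim R=q$, while all characters of $\Z/p$ into $\k^{\times}$ are trivial in characteristic $p$) are ruled out. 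The residual case $|G|=pq$ reduces to classifying groups of order $pq$, whose group algebras coincide with specific members of the listed families after a change of generators. So the principal case is $G=\langle g\rangle\cong\Z/q$, with $\dim R = p$.

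The one-dimensional Yetter--Drinfeld modules $V=\k x$ over $G$ are parametrized by pairs $(a,b)\in(\Z/q)^2$ with $\delta(x)=g^a\otimes x$ and $g\cdot x=\xi^b x$; their self-braiding is $q_{11}=\xi^{ab}$. The Nichols algebra $\B(V)$ has dimension $p$ in characteristic $p$ exactly when $q_{11}=1$, forcing $a\equiv 0$ or $b\equiv 0\pmod q$. Up to the automorphism $g\mapsto g^k$ of $G$ the distinct candidates are $(a,b)=(0,0),\ (0,1),\ (1,0)$, matching the comultiplication shape of family~1, of family~2, and of the common graded model underlying families~3 and~4, respectively.

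The main work, and the main obstacle, is the \emph{lifting step}: for each candidate $\gr H$, determine all filtered Hopf algebras $H$ whose associated graded equals it. The comultiplication being fixed, the freedom lies in deforming the relations $x^p=0$ and $gx-\xi^b xg=0$ by lower-coradical-degree elements of $\k G$, and imposing the Hopf axioms on such an ansatz reduces to a cocycle condition expanded via Jacobson's $p$-th power formula in characteristic $p$. In case $(0,0)$ this forces $x^p=\delta x$ with $\delta\in\{0,1\}$ (family~1); in case $(0,1)$ the $\xi$-twisted commutation produces the bifurcation $q\mid p-1$ versus $q\nmid p-1$ in family~2; in case $(1,0)$ the skew-primitive coproduct $\Delta(x)=x\otimes 1+g\otimes x$ forces any lift of $x^p$ to be $(g^p,1)$-skew-primitive, giving $x^p=\delta(1-g^p)$ for family~3, and when $q\mid p-1$ so that $g^p=g$ the alternative lift $x^p=x$ together with the deformation $gx-xg=g(1-g)$ becomes compatible, producing family~4. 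Exhaustiveness and pairwise non-isomorphism of the four families is then verified by reading off their group-likes and skew-primitives in each candidate.
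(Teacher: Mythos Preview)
The paper does not prove this theorem; it is quoted verbatim from \cite[Theorem~2.19]{Xio} and used as input for the indicator computations. There is therefore no proof in the paper to compare your attempt against. Your outline follows the Andruskiewitsch--Schneider lifting method, which is almost certainly the strategy used in \cite{Xio} as well, so at the level of architecture your plan is the expected one.

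That said, two points in your sketch do not hold up. First, your treatment of the case $|G|=pq$ is not correct: every Hopf algebra in the four listed families has group of group-likes exactly $\langle g\rangle\cong\Z/q$ (one checks directly that no element outside $\k\langle g\rangle$ is group-like, since $x$ is primitive or $(g,1)$-skew-primitive). Hence the group algebra $\k[\Z/pq]$, which is a $pq$-dimensional pointed Hopf algebra with $pq$ group-likes, is \emph{not} isomorphic to any member of the list ``after a change of generators'' as you claim. Either the quoted statement carries an implicit non-cosemisimplicity hypothesis from \cite{Xio}, or this case must be handled separately; your sketch does neither. Second, your description of family~4 reverses the bifurcation: the noncommutative deformation $gx-xg=g-g^2$ occurs when $q\nmid p-1$, while the $q\mid p-1$ subcase is the commutative algebra $\k[g,x]/(g^q-1,\,x^p-x)$. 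Your sentence ``when $q\mid p-1$ \ldots\ the deformation $gx-xg=g(1-g)$ becomes compatible'' has these swapped, and you give no account of how the $q\nmid p-1$ subcase of family~4 arises.
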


The approach of our proof is based on the above classification result and the following theorem and proposition. We also provide a lemma prior to the proof of Theorem \ref{main}. A \emph{left integral} in a finite dimensional Hopf algebra $H$ is an element $\Lambda\in H$ such that $h\Lambda=\e(h)\Lambda$, for all $h\in H$. The span of the left integrals in $H$ is a one-dimensional space. 

\begin{thm}\cite[Corollary 2.6]{KMN}\label{T:FS}
 Suppose $\lambda\in H^*$ and $\Lambda\in H$ are both left integrals %(or both right integrals) 
 such that $\lambda(\Lambda)=1$. Then
$
\nu_n(H)=\lambda \left(\Lambda^{[n]}\right)
$
for all positive integers $n$.
\end{thm}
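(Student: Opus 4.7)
The plan is to combine the Frobenius algebra structure of $H$ coming from the pair $(\lambda,\Lambda)$ with a general trace formula for endomorphisms of a Frobenius algebra. Since $\lambda$ is a nonzero integral in $H^*$, the bilinear pairing $\langle x,y\rangle=\lambda(xy)$ is non-degenerate, giving $H$ the structure of a Frobenius algebra, and the normalization $\lambda(\Lambda)=1$ singles out a distinguished Casimir element $c\in H\otimes H$. I would first verify that this Casimir is given by the closed form $c=\sum S(\Lambda_{(1)})\otimes\Lambda_{(2)}$; equivalently, that $\sum\lambda\!\bigl(S(\Lambda_{(1)})x\bigr)\Lambda_{(2)}=x$ for every $x\in H$. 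This identity follows from the antipode axiom $\sum S(\Lambda_{(1)})\Lambda_{(2)}=\e(\Lambda)1$ together with the left-integral property of $\Lambda$ and the normalization $\lambda(\Lambda)=1$.

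Once the Casimir is in hand, the standard Frobenius trace formula reads
\[
\mathrm{Tr}(f)\;=\;\lambda\circ m\circ(f\otimes\mathrm{id})(c)\;=\;\sum\lambda\!\bigl(f\bigl(S(\Lambda_{(1)})\bigr)\,\Lambda_{(2)}\bigr),\qquad f\in\End(H),
\]
which one checks by expanding $f(a_i)=\sum_j\lambda(f(a_i)b_j)a_j$ in dual bases $\{a_i\},\{b_i\}$ with respect to $\lambda(xy)$ and summing the diagonal. I would then apply this formula with $f=S\circ P_{n-1}$. Using that $S$ is an anti-algebra and anti-coalgebra map, a direct induction gives $S\circ P_{n-1}=P_{n-1}\circ S$, after which one invokes coassociativity $\Delta^{[n]}(\Lambda)=(\Delta^{[n-1]}\otimes\mathrm{id})\Delta(\Lambda)$ to telescope the resulting expression into a product of the coordinates of $\Delta^{[n]}(\Lambda)$, yielding $\lambda\bigl(\Lambda_{(1)}\Lambda_{(2)}\cdots\Lambda_{(n)}\bigr)=\lambda\bigl(\Lambda^{[n]}\bigr)$.

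The main obstacle I expect is the stray antipode squared that appears after substitution: after applying $S\circ P_{n-1}=P_{n-1}\circ S$, the trace formula produces $\sum\lambda\!\bigl(S^{2}(P_{n-1}(\Lambda_{(1)}))\,\Lambda_{(2)}\bigr)$, and to identify this with $\lambda(P_n(\Lambda))$ the $S^{2}$ must be removed. This requires the deeper fact that the Nakayama automorphism of the Frobenius form $\lambda(xy)$ is given, up to the distinguished grouplikes of $H$ and $H^*$, by $S^{-2}$, so that $\lambda\bigl(S^{2}(a)b\bigr)=\lambda(ba)$ on the relevant elements. A cleaner route I would try first is to work with the opposite-side Casimir $c'=\sum\Lambda_{(1)}\otimes S^{-1}(\Lambda_{(2)})$ dual to the form $\lambda(yx)$; with this choice the trace formula becomes $\mathrm{Tr}(f)=\sum\lambda\!\bigl(S^{-1}(\Lambda_{(2)})\,f(\Lambda_{(1)})\bigr)$, no $S^{2}$ ever materializes, and plugging in $f=S\circ P_{n-1}$ lands directly on $\lambda\bigl(S(P_{n-1}(\Lambda_{(1)})\Lambda_{(2)})\bigr)=\lambda\bigl(S(P_n(\Lambda))\bigr)$, reducing the problem to the identity $\lambda\circ S=\lambda$ on integrals, which holds because $\lambda\circ S$ is again a left integral in $H^*$ and the space of left integrals is one-dimensional.
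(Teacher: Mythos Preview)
The paper does not prove this theorem; it is quoted as Corollary~2.6 of \cite{KMN} and used as a black box in the subsequent computations. There is therefore no proof in the paper to compare against.

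That said, your outline follows the same Frobenius--algebra/dual--basis strategy as the original KMN argument, and your \emph{first} route---computing $\mathrm{Tr}(S\circ P_{n-1})$ via the Casimir $\sum S(\Lambda_{(1)})\otimes\Lambda_{(2)}$, confronting the stray $S^{2}$, and invoking the Nakayama/Radford description of $S^{2}$ in terms of the distinguished grouplikes---is essentially how the result is actually proved. The ``cleaner route'' you propose at the end, however, has two genuine errors and does not avoid the difficulty. First, the step
\[
S^{-1}(\Lambda_{(2)})\,S\bigl(P_{n-1}(\Lambda_{(1)})\bigr)\;=\;S\bigl(P_{n-1}(\Lambda_{(1)})\,\Lambda_{(2)}\bigr)
\]
is false: since $S$ is an anti-homomorphism, the right-hand side equals $S(\Lambda_{(2)})\,S\bigl(P_{n-1}(\Lambda_{(1)})\bigr)$, which differs from the left-hand side by an $S^{2}$ on the first factor---precisely the obstruction you were trying to sidestep. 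Second, the claim that $\lambda\circ S$ is again a \emph{left} integral in $H^{*}$ is wrong: because the transpose of $S$ is an algebra anti-automorphism of $H^{*}$, $\lambda\circ S$ is a \emph{right} integral, and for non-unimodular $H^{*}$ one cannot conclude $\lambda\circ S=\lambda$ from one-dimensionality of the left-integral space. So the shortcut collapses back into the $S^{2}$ problem, and you still need the Radford-type input you identified in the first approach.
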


%Recently, Shimizu \cite{Shi} investigated more properties of these indicators in \cite{Shi}.
\begin{prop}\cite[Corollary 3.17, Corollary 4.6]{Shi}\label{PIN}
Let $H$ and $H'$ be finite-dimensional Hopf algebras. Then, for all positive integers $n$, we have the following:
\begin{itemize}
\item[1)] $\nu_n(H^*)=\nu_n(H)$ and $\nu_n(H\otimes H')=\nu_n(H)\cdot \nu_n(H')$.
%\item[2)] $\nu_n(H)=\nu_n(H\otimes K)$, where $K$ is a field extension of $k$.
\item[2)] If $H$ is filtered, then $\nu_n(\gr H)=\nu_n(H)$.
\item[3)] If $\Chara \k >0$, then $\{\nu_n(H)\}$ is periodic.
\end{itemize}
\end{prop}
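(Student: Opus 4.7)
My plan is to verify each of the three statements directly from the trace formula $\nu_n(H) = \mathrm{Tr}(S\circ P_{n-1})$, exploiting naturality of the Sweedler power construction with respect to the Hopf algebra operations. For Part 1, the duality statement follows because multiplication and comultiplication swap roles under the Hopf duality $H\leftrightarrow H^*$, so $P_n^{H^*}:H^*\to H^*$ is the linear dual of $P_n^H:H\to H$, and similarly $S_{H^*}$ is the linear dual of $S_H$. Using the identity $\mathrm{Tr}(T)=\mathrm{Tr}(T')$ for any endomorphism $T$ (where $T'$ denotes the linear dual) together with cyclicity of trace, one computes
\[
\nu_n(H^*)\;=\;\mathrm{Tr}\bigl(S_{H^*}\circ P_{n-1}^{H^*}\bigr)\;=\;\mathrm{Tr}\bigl((P_{n-1}^{H}\circ S_H)'\bigr)\;=\;\mathrm{Tr}\bigl(S_H\circ P_{n-1}^H\bigr)\;=\;\nu_n(H).
\]
For the tensor identity, the tensor-product Hopf structure on $H\otimes H'$ gives directly $P_n^{H\otimes H'}=P_n^H\otimes P_n^{H'}$ and $S_{H\otimes H'}=S_H\otimes S_{H'}$, after which multiplicativity of trace on a tensor product of operators finishes the argument.

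For Part 2, I would observe that a Hopf algebra filtration $F_\bullet H$ is respected by $m$, $\Delta$, and $S$, so $P_{n-1}^H$ and hence $S_H\circ P_{n-1}^H$ are filtered endomorphisms of $H$. Taking associated gradeds commutes with composition and with the defining operations of $P_{n-1}$ and $S$, so
\[
\gr\bigl(S_H\circ P_{n-1}^H\bigr)\;=\;S_{\gr H}\circ P_{n-1}^{\gr H}.
\]
Choosing a basis of $H$ adapted to the filtration, the matrix of a filtered endomorphism is block-upper-triangular with diagonal blocks equal to the matrix of its associated graded, so the two traces agree, yielding $\nu_n(H)=\nu_n(\gr H)$.

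Part 3 is the main obstacle and requires genuinely characteristic-$p$ input. My plan would be to show that the sequence $\{P_n\}_{n\ge 0}$ of Sweedler power operators inside the finite-dimensional algebra $\End(H)$ is eventually periodic, after which tracing yields periodicity of $\{\nu_n(H)\}$. Since $\End(H)$ is finite dimensional, some nontrivial $\k$-linear relation among the $P_i$ must hold, but upgrading this to honest periodicity --- rather than merely an eventual linear dependence --- needs a Frobenius-style identity peculiar to $\Chara\k>0$. Concretely, one can first reduce to $\gr H$ by Part 2 and then exploit that in characteristic $p$ the $p$-th Sweedler power satisfies a Frobenius-type relation, producing a recurrence of the form $P_{n+N}=P_n$ for all sufficiently large $n$. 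Making this recurrence precise uniformly in $H$ is the delicate step.
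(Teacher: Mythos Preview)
The paper does not prove this proposition at all: it is quoted from \cite{Shi} (Corollaries~3.17 and~4.6) and used as a black box. So there is no in-paper argument to compare against, and your direct proofs of Parts~1) and~2) are correct and standard --- convolution powers and the antipode dualize under $H\leftrightarrow H^*$, trace is invariant under transpose and under cyclic permutation, and a filtered endomorphism has the same trace as its associated graded because its matrix in a filtration-adapted basis is block upper triangular with the graded pieces on the diagonal.

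Part~3) is where there is a genuine gap. Writing $P_n=\mathrm{id}_H^{*n}$ in the convolution algebra $(\End_\k H,*)\cong H\otimes H^{*}$, finite-dimensionality gives only a $\k$-linear recurrence among the $P_n$, not periodicity. To upgrade to an identity $P_{n+N}=P_n$ you would need every convolution-eigenvalue of $\mathrm{id}_H$ to be zero or a root of unity; nothing in your sketch establishes this, since the minimal polynomial of $\mathrm{id}_H$ in $H\otimes H^{*}$ has coefficients in $\k$, not a priori in the prime field $\mathbb F_p$. The appeal to a ``Frobenius-type relation'' for $P_p$ is not made precise, and there is no general identity of the shape $P_p=\text{(Frobenius)}$ for noncommutative, noncocommutative $H$; reducing to $\gr H$ via Part~2) does not by itself supply one. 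You have correctly located the difficulty but not resolved it --- a complete argument for~3) requires either the missing eigenvalue/root-of-unity statement or the structural results that Shimizu develops in \cite{Shi} to reach his Corollary~4.6.
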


Although Proposition \ref{PIN}, 2) is true for any Hopf filtration, in this paper we use coradical filtration. 

%The \emph{coradical} $H_0$ of $H$ is the sum of all simple subcoalgebras of $H$. If $H_0$ is one-dimensional, $H$ is said to be \emph{connected}. If every simple subcoalgebra is one-dimensional of the form $kg$ where $g$ is a group-like element, $H$ is said \emph{pointed}. The \emph{coradical filtration} $\{H_n\}_{n\ge 0}$ of $H$ is defined inductively such that $H_n=\Delta^{-1}(H\otimes H_{n-1}+H_0\otimes H)$ for all integers $n\ge 1$. We use $\gr_C H=\bigoplus_{i\ge 0} H_{i}/H_{i-1}$ ($H_{-1}=0$) to denote the \emph{associated graded coalgebra} with respect to the coradical filtration of $H$. If $H$ is finite dimensional, then $\gr_C H$ is a graded Hopf algebra if and only if $H_0$ is a Hopf subalgebra. 

\begin{lem}\label{nplus1}
Let $i, n$ be integers for $0\leq i\leq q-1$ and $n\geq 1$. Suppose nonnegative integers $k_1,\ldots,k_{n+1}$ form a partition of $p-1$.
\begin{itemize}
\item[1)] If $xg=gx$, $\Delta(g)=g\otimes g$, and $\Delta(x)=x\otimes 1 + g\otimes x$, then
\[
(g^i x^{p-1})^{[n+1]}=\sum_{\substack{0\le k_1,\ldots,k_n\le p-1}} {p-1\choose k_1,\ldots,k_{n+1}}
g^{(n+1)i+nk_1+(n-1)k_2+\cdots+k_n}x^{p-1}.
\]
\item[2)] If $xg=\xi gx$, $\Delta(g)=g\otimes g$, and $\Delta(x)=x\otimes 1 + 1\otimes x$, then
\[(g^i x^{p-1})^{[n+1]}=\bigg( (\xi^i)^n+\cdots+\xi^i+1 \bigg)^{p-1}g^{(n+1)i}x^{p-1}.
\]
\end{itemize}
\end{lem}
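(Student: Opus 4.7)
The plan is to compute $(g^i x^{p-1})^{[n+1]} = m^{[n+1]} \circ \Delta^{[n+1]}(g^i x^{p-1})$ directly in each case. Since $\Delta^{[n+1]}$ is an algebra homomorphism and $g$ is group-like, we have $\Delta^{[n+1]}(g^i x^{p-1}) = (g^i)^{\otimes(n+1)} \cdot \Delta^{[n+1]}(x)^{p-1}$, so the real work reduces to expanding $\Delta^{[n+1]}(x)^{p-1}$ and then applying $m^{[n+1]}$ after tensoring with $(g^i)^{\otimes(n+1)}$.

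For part 1), a straightforward induction on $n$ using $\Delta(x)=x\otimes 1+g\otimes x$ gives
\[
\Delta^{[n+1]}(x)=\sum_{j=1}^{n+1} g^{\otimes(j-1)}\otimes x \otimes 1^{\otimes(n+1-j)}.
\]
The commutativity hypothesis $xg=gx$ forces the $n+1$ summands above to commute pairwise in $H^{\otimes(n+1)}$, so the multinomial theorem applies. Reading off each tensor slot, position $\ell$ of the resulting tensor becomes $g^{k_{\ell+1}+\cdots+k_{n+1}} x^{k_\ell}$ times the multinomial coefficient. Multiplying componentwise by $g^i$ and applying $m^{[n+1]}$, commutativity collects all exponents of $g$ into $(n+1)i+\sum_{m=2}^{n+1}(m-1)k_m$. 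The reindexing $k_m \mapsto k'_{n+2-m}$, a bijection on partitions of $p-1$ that preserves the multinomial coefficient, then rewrites this exponent as $(n+1)i+nk_1+(n-1)k_2+\cdots+k_n$, matching the claim.

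For part 2), $x$ is primitive so $\Delta^{[n+1]}(x) = \sum_{j=1}^{n+1} 1^{\otimes(j-1)}\otimes x\otimes 1^{\otimes(n+1-j)}$, whose summands commute in $H^{\otimes(n+1)}$ for trivial reasons. The multinomial theorem yields
\[
\Delta^{[n+1]}(x)^{p-1}=\sum \binom{p-1}{k_1,\ldots,k_{n+1}}\,x^{k_1}\otimes \cdots \otimes x^{k_{n+1}},
\]
and after tensoring with $(g^i)^{\otimes(n+1)}$, applying $m^{[n+1]}$ reduces to straightening the ordered word $\prod_{\ell=1}^{n+1} g^i x^{k_\ell}$. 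Using the $\xi$-commutation $x^a g^b = \xi^{ab} g^b x^a$ inductively to move all copies of $g^i$ to the left produces the scalar $\xi^{i(nk_1+(n-1)k_2+\cdots+k_n)}$ times $g^{(n+1)i}x^{p-1}$. Summing against the multinomial coefficients and recognizing the result as the multinomial expansion of $\bigl((\xi^i)^n+\cdots+\xi^i+1\bigr)^{p-1}$ delivers the stated closed form.

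The main obstacle is the careful index bookkeeping: in part 1), identifying the exponent of $g$ coming out of the naive expansion with the symmetric form in the statement (which requires the partition relabeling), and in part 2), correctly tracking the power of $\xi$ accumulated as each block $x^{k_\ell}$ is commuted past the $g^i$'s standing between it and its final position on the right of the word.
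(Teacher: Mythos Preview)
Your proof is correct and, for part 2), essentially identical to the paper's. For part 1) the paper proceeds by explicit induction on $n$---working out the cases $n=1,2$ by hand and then asserting the general formula---and lands on the exponent $(n+1)i+k_1+2k_2+\cdots+nk_n$, which is equivalent to the statement's form by the same partition reindexing you invoke. Your route is slightly cleaner: by first writing $\Delta^{[n+1]}(x)=\sum_{j} g^{\otimes(j-1)}\otimes x\otimes 1^{\otimes(n+1-j)}$ and observing these summands commute, you can apply the multinomial theorem once and treat both parts uniformly, avoiding the step-by-step inductive bookkeeping. The underlying content is the same; your presentation is just more streamlined.
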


\begin{proof}
1) If $n=1$, then $(g^i x^{p-1})^{[n+1]}=(g^i x^{p-1})^{[2]}=m\circ \Delta (g^i x^{p-1})$. Firstly,
%=\Delta(g)^i\Delta(x)^{p-1}
\[
\Delta(g^ix^{p-1})=g^i\otimes g^i (x\otimes 1+g\otimes x)^{p-1}=\sum_{k_1=0}^{p-1} {p-1 \choose k_1}g^ix^{p-1-k_1}g^{k_1}\otimes g^ix^{k_1}.
\]
%\begin{align*}
%\Delta(g^ix^{p-1})&=\Delta(g)^i\Delta(x)^{p-1}=g^i\otimes g^i (x\otimes 1+g\otimes x)^{p-1}\\
%&=\sum_{k_1=0}^{p-1} {p-1 \choose k_1}g^ix^{p-1-k_1}g^{k_1}\otimes g^ix^{k_1}
%\end{align*}
Note that $xg=gx$. Then
\[
(g^i x^{p-1})^{[2]}=\sum_{k_1=0}^{p-1} {p-1 \choose k_1} g^{2i+k_1}x^{p-1}.
\]
Now let $n=2$, and $k_1, k_2, k_3$ form a partition of $p-1$. Then
\[
\Delta^{[3]}(g^ix^{p-1})=\Delta\otimes \text{id}\circ \Delta(g^ix^{p-1})
=\sum_{k_1=0}^{p-1} {p-1 \choose k_1}\Delta(g^ix^{p-1-k_1})\Delta(g)^{k_1}\otimes \text{id}(g^ix^{k_1}).
\]
Following the case when $n=1$, we have
\[
\Delta(g^i x^{p-1-k_1})=\sum_{k_2=0}^{p-1-k_1} {p-1-k_1 \choose k_2}g^ix^{k_3}g^{k_2}\otimes g^ix^{k_1}.
\]
Hence,
\[
\Delta^{[3]}(g^ix^{p-1})=\sum_{k_1=0}^{p-1}\sum_{k_2=0}^{p-1-k_1} {p-1\choose k_1} {p-1-k_1 \choose k_2}g^ix^{k_3}g^{k_1+k_2}\otimes g^ix^{k_2}g^{k_1}\otimes g^ix^{k_1}.
\]
Thus, using multinomial coefficients, we have
\begin{align*}
(g^ix^{p-1})^{[3]}&
=\sum_{0\leq k_1, k_2\leq p-1}^{p-1}{p-1\choose k_1, k_2, k_3} g^{3i+k_1+k_2+k_1}x^{p-1}\\
&=\sum_{0\leq k_1, k_2\leq p-1}^{p-1}{p-1\choose k_1, k_2, k_3} g^{3i+k_1+2k_2}x^{p-1}.
\end{align*}
Finally, one can show inductively that
\[
(g^i x^{p-1})^{[n+1]}=\sum_{\substack{0\le k_1,\ldots,k_n\le p-1}} {p-1\choose k_1,\ldots,k_{n+1}}
g^{(n+1)i+k_1+2k_2+\cdots+nk_n}x^{p-1}.
\]

2) It can be shown that
\begin{align*}
\Delta^{[n+1]}(x^{p-1})&=\big(\Delta^{[n+1]}(x)\big)^{p-1}=\big(x\otimes 1\otimes\ldots\otimes 1+\ldots + 1\otimes\ldots\otimes 1\otimes x\big)^{p-1}\\
&=\sum_{\substack{0\le k_1,\ldots,k_n\le p-1}} {p-1\choose k_1,\ldots,k_{n+1}}
x^{k_1}\otimes x^{k_2}\ldots \otimes x^{k_{n+1}}.
\end{align*}
Note that we have $xg=\xi gx$. Hence,
\begin{align*}
&(g^ix^{p-1})^{[n+1]}=\sum_{\substack{0\le k_1,\ldots,k_n\le p-1}} {p-1\choose k_1,\ldots,k_{n+1}}g^ix^{k_1} g^ix^{k_2}\ldots g^ix^{k_{n+1}}\\
=&\sum_{\substack{0\le k_1,\ldots,k_n\le p-1}} {p-1\choose k_1,\ldots,k_{n+1}}
(\xi^i)^{nk_1}(\xi^i)^{(n-1)k_2}\cdot(\xi^i)^{k_n}\cdot g^{(n+1)i}x^{p-1}\\
=&\bigg((\xi^i)^n+\cdots+\xi^i+1 \bigg)^{p-1} g^{(n+1)i}x^{p-1}.
\end{align*}
\end{proof}

\noindent\textbf{The proof of Theorem \ref{main}.}
Let $H$ be a $pq$-dimensional pointed Hopf algebra $H$ over $\field$ given in Theorem \ref{pqClass}. The linear basis of $H$ can be chosen as $\{g^ix^j\, |\, 0\le i \le p-1, 0\le j \le q-1\}$. The associated graded Hopf algebras $\gr H$ is one of the following, in the correspondence $1)\rightsquigarrow  A$, $3), 4)\rightsquigarrow B$, and $2)\rightsquigarrow C$.
\begin{itemize}
\item [] $A=\field[g,x]/ (g^q-1,\  x^p)$, with $\Delta(g)=g\otimes g, \Delta(x)=x\otimes 1 + 1\otimes x$.
\vspace{0.1in}

\item [] $B=\field[g,x]/ (g^q-1,\  x^p)$, with $\Delta(g)=g\otimes g, \Delta(x)=x\otimes 1 + g\otimes x$.
\vspace{0.1in}

\item [] $C=\field \langle g,x\rangle / (g^q-1,\  x^p,\ xg-\xi gx)$, with $\Delta(g)=g\otimes g, \Delta(x)=x\otimes 1 + 1\otimes x$.
\end{itemize}
Next we find case by case the indicators of these associated graded Hopf algebras.

\textbf{Case $A$}. Note that $A=\field[g]/(g^q-1)\otimes \field[x]/(x^p)$ with $\Delta(g)=g\otimes g$, and  $\Delta(x)=x\otimes 1 + 1\otimes x$.
Following from the counting argument
\[\nu_n(\k[g]/(g^q-1))=\#\{h \in \langle g \rangle\, |\, h^n=1_{\langle g\rangle} \},\]
we have \[ \nu_n\big(\field[g]/(g^q-1)\big)= \chi_q(n), \quad \text{for all}\quad n\geq 1.\]
By \cite[Corollary 3.9]{WW}, it follows that
\[\nu_n\big(\field[x]/(x^p)\big)\equiv \chi_p(n) \pmod p, \quad \text{for all}\quad n\geq 1.\]
Therefore \[\nu_n(A)\equiv \chi_p(n)\chi_q(n) \pmod p, \quad \text{for all}\quad n\geq 1. \]

%%%%%%%%%%%%%%%%%%%%%%Case 2 %%%%%%%%%%%%%%%%%%%%
\textbf{Case $B$}. The comultiplication on  the basis elements is given by
%$g^ix^j$ for  $0\leq i\leq q-1$ and  $0\leq j\leq p-1$. We have
\[\Delta(g^ix^j)=\sum_{k=0}^j{j \choose k} g^{k+i}x^{j-k}\otimes g^ix^k.\]
Let $\lambda_B= \delta_{g^{1-p}x^{p-1}} \in B^*$. Then
\[
\delta_{g^{1-p}x^{p-1}} (g^i x^j)=
\begin{cases}
1 &  \text{if}\; j=p-1\; \text{and}\;  i\equiv 1-p \pmod q\\
0 & \text{otherwise}.
\end{cases}
\]
On the other hand,
\begin{align*}
\sum_{k=0}^j  &{j \choose k}g^{k+i}x^{j-k}\cdot \delta_{g^{1-p}x^{p-1}} (g^ix^k) \\
&=\begin{cases}
g^{p-1+1-p}x^0=1 & \text{if}\; j=k=p-1\; \text{and}\;  i\equiv 1-p \pmod q\\
0 & \text{otherwise}.
\end{cases}
\end{align*}
Therefore, by \cite[Lemma 2.3]{HHWW},
%that $\lambda$ is a left integral of $H^*$ if and only if $\sum h_{(1)}\lambda (h_{(2)})=\lambda (h)$ for any $h\in H$.
$\lambda_B= \delta_{g^{1-p}x^{p-1}}$ is a left integral of  $B^*$.
Next consider the element $\Lambda_B=(1+g+\cdots+g^{q-1}) x^{p-1}\in B$. It is clear that $x\Lambda_B=0=\e(x)\Lambda_B$ and $g\Lambda_B=\Lambda_B=\e(g)\Lambda_B$. Hence $\Lambda_B=(1+g+\cdots+g^{q-1})x^{p-1}$ is a left integral of $B$. It is clear that $\lambda_B\Big(\Lambda_B)=1$ . Following Theorem \ref{T:FS}, for $n\geq 1$ we compute
\[
\nu_{n+1}(B)=\lambda_B\Big(\Lambda_B^{[n+1]}\Big)=  \delta_{g^{1-p}x^{p-1}}\Big(\sum_{i=0}^{q-1} (g^ix^{p-1})^{[n+1]}\Big).
\]
Note that $gx=xg$ and $\delta(x)=x\otimes 1+ g\otimes x$. By Lemma \ref{nplus1},
\[
\nu_{n+1}(B)= \sum_{\substack{0\le i\le q-1\\ 0\le k_1,\ldots,k_n\le p-1}} {p-1\choose k_1,\ldots,k_{n+1}} \delta_{g^{1-p}}
\big(g^{(n+1)i+k_1+2k_2+\cdots+nk_n}\big).
\]
For any fixed partition $k_1, \ldots, k_n, k_{n+1}$ of $p-1$ we have
\[
\sum_{i=0}^{q-1} \delta_{g^{1-p}}
\big(g^{(n+1)i+k_1+2k_2+\cdots+nk_n}\big)
=\begin{cases}
1 & \text{if}\; q \nmid n+1\\
q  \delta_{g^{1-p}}
\big(g^{k_1+2k_2+\cdots+nk_n}\big) & \text{if}\; q \mid n+1
\end{cases}.
\]
Hence, for $q \nmid n+1$, we have
\[
\nu_{n+1}(B)= \sum_{\substack{0\le i\le q-1\\ 0\le k_1,\ldots,k_n\le p-1}} {p-1\choose k_1,\ldots,k_{n+1}}=(n+1)^{p-1}.
\]
For $q\mid n+1$, we have
\[
\nu_{n+1}(B)= \sum_{\substack{ k_1+2k_2+\cdots+nk_n \equiv 1-p \pmod q \\ 0\le k_1,\ldots,k_n\le p-1}}  q {p-1\choose k_1,\ldots,k_{n+1}}.
\]
%$\nu_{1}(B)=1$
%%%%%%%%%%%%%%%%%%%%%%Case 3 %%%%%%%%%%%%%%%%%%%%
\textbf{Case $C$}. Since $\Delta(x)=x\otimes 1+ 1\otimes x$, we have
\[\Delta(g^ix^j)=\sum_{k=0}^j{j \choose k} g^ix^{j-k}\otimes g^ix^k,\]
for the basis elements $g^ix^j$ for  $0\leq i\leq q-1$ and  $0\leq j\leq p-1$. By a similar argument as that for $B$, the element $\lambda_C=\delta_{x^{p-1}} \in C^{*}$  is an left integral of $C^{*}$. Next consider the element $\Lambda_C=(1+g+\cdots+g^{q-1}) x^{p-1}\in C$. It is clear that $g\Lambda_C=\e(g)\Lambda_C$. Note that $xg=\xi gx$. Then
\[
x\Lambda_C=x(1+g+\cdots+g^{q-1}) x^{p-1}= \big(1+ \xi g+ \cdots + (\xi g)^{q-1}\big)x^p= 0=\e(x)\Lambda_C.
\]
Hence $\Lambda_C=(1+g+\cdots+g^{q-1})x^{p-1}$ is a left integral of $C$.
Again, by Theorem \ref{T:FS},
\begin{align*}
\nu_{n+1}(C)&
=  \delta_{x^{p-1}}\bigg(\sum_{i=0}^{q-1} (g^ix^{p-1})^{[n+1]}\bigg)=\delta_{x^{p-1}}\bigg(\sum_{i=0}^{q-1}  \big( (\xi^i)^n+\cdots+\xi^i+1 \big)^{p-1}g^{(n+1)i}\bigg)\\
&=\delta_{x^{p-1}}\bigg((n+1)^{p-1}x^{p-1} + \sum_{i=1}^{q-1}  \frac{(\xi^i)^{n+1}-1}{\xi^i-1}g^{(n+1)i}x^{p-1}\bigg)=(n+1)^{p-1}.
\end{align*}
It follows from the Little Fermat's Theorem that  $(n)^{p-1} \equiv \chi_p(n) \pmod p$. Hence we have $\nu_{n}(C)= \chi_p(n)$ for any $n\geq 1$. 

Note that $A$ is self-dual, $B$ and $C$ are dual Hopf algebras.  Therefore, combining Proposition \ref{PIN} and the three cases, we prove Theorem \ref{main} and the following corollary.

%Since $B$ and $C$ are dual Hopf algebras, $\nu_{n+1}(B)=\nu_{n+1}(C)$ for all $n\geq 1$. We obtain 

\begin{cor}
The following combinatorics identity holds.
\[
\sum_{\substack{ k_1+2k_2+\cdots+(n-1)k_{n-1} \equiv 1-p \\
\pmod q, \text{ for } 0\le k_1,\ldots,k_{n-1}\le p-1}}  q {p-1\choose k_1,\ldots,k_{n}}
=n^{p-1}\; \pmod p.
\]
where $p$, $q$ are primes, $n> 1$ is an integer, and $q\mid n$.
\end{cor}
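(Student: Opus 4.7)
The plan is to extract the corollary as an immediate consequence of the two expressions for $\nu_n(B)$ already established in the proof of Theorem \ref{main}; one simply equates the closed-form sum computed in Case $B$ with the modular value predicted by Theorem \ref{main} itself.

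First I would recall from Case $B$ the explicit formula
\[
\nu_{n+1}(B) = \sum_{\substack{k_1+2k_2+\cdots+nk_n \equiv 1-p \pmod q \\ 0 \le k_1,\ldots,k_n \le p-1}} q \binom{p-1}{k_1,\ldots,k_{n+1}}, \qquad q \mid n+1,
\]
and reindex $n+1 \rightsquigarrow n$ so that, under the hypothesis $q \mid n$, the left-hand side of the corollary equals $\nu_n(B)$.

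Second, I would verify that $B$ lies outside the commutative-and-cocommutative branch of Theorem \ref{main}: it arises as the associated graded Hopf algebra of every member of families 3) and 4) of Theorem \ref{pqClass}, whose coproduct $\Delta(x)=x\otimes 1+g\otimes x$ is manifestly not cocommutative when $g\ne 1$. Hence Proposition \ref{PIN}, 2) together with Theorem \ref{main} gives $\nu_n(B)\equiv \chi_p(n)\pmod p$. Finally, Fermat's little theorem yields $n^{p-1}\equiv \chi_p(n)\pmod p$ for every positive integer $n$ (both sides are $0$ if $p\mid n$ and $1$ otherwise). Chaining these three congruences produces the claimed identity.

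The only care required is the bookkeeping of the index shift $n+1\rightsquigarrow n$ and the confirmation that every pointed Hopf algebra whose associated graded is $B$ falls into the \emph{otherwise} branch of Theorem \ref{main}; no fresh calculation beyond what the proof of Theorem \ref{main} has already carried out is needed.
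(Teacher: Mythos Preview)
Your proposal is correct and follows essentially the same approach as the paper: both arguments equate the Case~$B$ sum formula for $\nu_n(B)$ (when $q\mid n$) with $\chi_p(n)\equiv n^{p-1}\pmod p$. The only cosmetic difference is that the paper invokes the duality between $B$ and $C$ (Proposition~\ref{PIN}\,1)) together with the explicit value $\nu_n(C)=\chi_p(n)$ from Case~$C$ directly, whereas you route the same conclusion through the already-established statement of Theorem~\ref{main}.
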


%%%%%%%%%%%%%%%%%%%%%%%%%%%%%%%%%%

\end{document}